\documentclass{amsart}
\usepackage[margin=1in]{geometry}

\usepackage{amsmath}
\usepackage{amsfonts}
\usepackage{amssymb,enumerate}
\usepackage{amsthm}
\usepackage{comment}
\usepackage{color}
\usepackage[all]{xy}
\usepackage{hyperref}
\usepackage{lineno}
\usepackage{graphicx}
\usepackage{setspace}
\usepackage{fancyhdr}
\usepackage{filecontents}

\begin{filecontents}{refs.bib}
@PREAMBLE{
 "\providecommand{\noopsort}[1]{}" 
 # "
}

@book{ah,
    author = {Ahlfors, L. V.},
    year = {1979},
    title = {Complex Analysis},
    publisher = {Mcgraw-Hill},
    edition = {3},
    pages = {127,150}
}

@book{ad,
    author = {Adams, C. C.},
    year = {1920},
    title = {The Knot Book},
    publisher = {W. H. Freeman and Company}
}

@article{lipt,
    author = {Lipton, M.},
    year = {2021},
    title = {A lower bound on critical points of the electric potential of a knot},
    journal = {Journal of Knot Theory and Its Ramifications},
    keywords = {electrostatics, physical knots, Morse theory},
    volume = {30},
    number = {04},
    pages = {2150026}
}

@article{lipl,
    author = {Lipton, M. and Townsend, A. and Strogatz, S. H.},
    year = {2022},
    title = {Exploring the electric field around a loop of static charge:
Rectangles, stadiums, ellipses, and knots}, 
    journal = {Physical Review Research},
    volume = {4},
    issue = {3},
    pages = {033249}
}

@misc{prz,
      title={History of Knot Theory}, 
      author={Jozef H. Przytycki},
      year={2007},
      eprint={math/0703096},
      archivePrefix={arXiv},
      primaryClass={math.GT},
      url={https://arxiv.org/abs/math/0703096}, 
}

@misc{res,
    title = {Residue of an Analytic Function},
    url = {https://encyclopediaofmath.org/wiki/Residue_of_an_analytic_function},
    author = {Encyclopedia of Mathematics},
    publisher = {Springer}
}

@article{zyp,
    title = {Off-axis electric field of a ring of charge},
    author = {Zypman, F. R.},
    year = {2006},
    journal = {American Journal of Physics},
    volume = {74},
    issue = {4},
    pages = {295-300}
}

@article{dom,
  title = {Knots in charged polymers},
  author = {Dommersnes, P. G. and Kantor, Y. and Kardar, M.},
  journal = {Physical Review E},
  volume = {66},
  issue = {3},
  pages = {031802},
  numpages = {8},
  year = {2002},
  month = {9},
  publisher = {American Physical Society},
  doi = {10.1103/PhysRevE.66.031802}
}

@article{weber,
    title = {Numerical Simulation of Gel Electrophoresis of DNA Knots in Weak and Strong Electric Fields},
    journal = {Biophysical Journal},
    volume = {90},
    number = {9},
    pages = {3100-3105},
    year = {2006},
    doi = {10.1529/biophysj.105.070128},
    author = {Weber, C. and Stasiak, A. and De Los Rios, P. and Dietler, G.}
}

@article{ars,
    author = {Arsuaga, J. and Vasquez, M. and Trigueros, S. and Sumners, D. and Roca, J.},
    title = {Knotting probability of DNA molecules confined in restricted volumes: DNA knotting in phage capsids},
    journal = {Proceedings of the National Academy of Sciences},
    volume = {99},
    number = {8},
    pages = {5373-5377},
    year = {2002},
    doi = {10.1073/pnas.032095099}
}

@misc{branch,
    author = {Sperhake, U.},
    title = {Part IB Complex Methods Lecture Notes},
    month = {1},
    year = {2021},
    publisher = {Cambridge University}
}
\end{filecontents}


\newtheorem*{maintheorem*}{Main Theorem}
\newtheorem{theorem}{Theorem}[section]
\newtheorem{prop}[theorem]{Proposition}
\newtheorem{question}[theorem]{Question}

\theoremstyle{definition}
\newtheorem{definition}[theorem]{Definition}

\numberwithin{equation}{section}

\newcommand{\R}{\mathbb{R}}

\allowdisplaybreaks

\title{The Axial Electric Potential and Length of a Torus Knot}
\author{Henry Jiang}


\usepackage[backend=bibtex]{biblatex}
\addbibresource{refs.bib}

\begin{document}

\begin{abstract}
    Physical knot theory, where knots are treated like physical objects, is important to many fields. One natural problem is to give a knot a uniform charge, and analyze the resulting electric field and electric potential. There have been some results on the number of critical points of the electric potential from knots, such as by Lipton (2021) and Lipton, Townsend, and Strogatz (2022). However, little analysis has been done on the electric field and electric potential using calculations for specific knots.

    We focus on torus knots, specifically a parametrization that embeds it on a torus centered at the origin with rotational symmetry about the z-axis. Particularly, in this project, we analyze the electric field along the z-axis to take advantage of symmetry. We also analyze the length of the knot as a simpler integral. We show that the electric field is zero only at the origin, and investigate the extreme points of the electric field and electric potential using numerical methods and calculations. We also demonstrate a new way to apply methods for contour integration in complex analysis to calculate the length, electric potential, and electric field, and provide an explicit approximation for the length of a torus knot.
\end{abstract}

\maketitle

\section{Introduction and Goals}


In knot theory, a \emph{knot} is analogous to tying a piece of string in some way, then gluing the ends together. Classically, deforming a knot does not change the knot, and knots are considered to have no width. In physical knot theory, we cannot deform knots without changing its physical properties, so we must provide parametrizations of knots.

One intuitive problem that arises is adding a uniform charge on a knot. The resulting electric fields have been studied somewhat using general bounds (see \cite{lipt}) or simulations (see \cite{lipl}). However, one direction that has been studied very little is the analysis of electric fields from specific knots using rigorous calculations rather than approximations, which has only been done for the circle \cite{zyp}, but not for general knots. In addition, the the electric field and potential along the $z$-axis, the symmetric axis of the knot, have not been studied in depth for classes of knots like torus knots, especially by numerical methods. These are the main problems we will address. 

This problem and the results in our paper have many practical applications. The study of the electric field and electric potential of charged knots is relevant for making synthetic polymers in knotted configurations, which is used to make new materials tailored to specific uses (\cite{dom}). DNA, which is charged, also often takes the shape of a knot. Thus, analyzing properties of charged knots is relevant in studying enzymes that knot DNA (\cite{ars}). It is also relevant in gel electrophoresis of knotted DNA (which is used to differentiate strands of DNA) (\cite{weber}) as well as DNA-based computing, which uses DNA to store information. My results are therefore relevant to materials scientists, geneticists, and molecular biologists. 

In this paper, we have written a simulation to calculate the desired electric fields on the $z$-axis for various types of torus knots, and for various points on the $z$-axis. These numerical calculations have revealed patterns on the behavior of the electric field and potential. We have also proven some results using calculations, notably that the only zero of the electric field on the $z$-axis is at the origin. 

We have then applied some of the methods of complex analysis to the problem. Complex analysis methods have not been demonstrated on this problem in the past. Using our new method, we successfully found a good approximation for the length of a torus knot. We also show how this method could be applied to the electric potential and field. This provides a new method for calculating important properties of specific torus knots quickly and precisely. 

\section{Background on Torus Knots and Electric Fields}

Torus knots are the most intuitive knots to define, and are the most common types of knots that appear in practical applications. In physical knot theory, we must introduce a specific function that defines the torus knot we want to study. In this section, we introduce the torus knot and define properties such as the electric field and electric potential.

\subsection{Torus Knots}

In classical knot theory, we would not care about the exact shape of the knots as long as they are isomorphic: we consider knots to be a closed piece of string that can be moved freely without cutting it or passing it through itself. However, in physical knot theory, we must consider the actual shape of the knot in space, so we need an explicit equation for the knot.

We will focus on the \emph{torus knots}, which are defined as follows:
\begin{definition}
    Let $p$ and $q$ be two relatively prime positive integers. Then, the $(p,q)-$\emph{torus knot} is created by winding a string $q$ times through the hole of the torus and $p$ times around the larger circle of the torus. 
    
    Explicitly, one form of the $(p,q)$-torus knot can be given by the parametrization
    \begin{align*}
        x &= (\cos(qt) + 2)\cos(pt) \\
        y &= (\cos(qt) + 2)\sin(pt) \\
        z &= -\sin(qt)
    \end{align*}
    where $t$ ranges from $0$ to $2\pi$.
\end{definition}

Note that a torus rotationally symmetric about the $z$-axis can be defined with the equation \[\left(\sqrt{x^{2} + y^{2}} - R\right)^{2} + z^{2} = r\] for positive constants $R,r$. Here, $r$ is the radius of ``small'' circles formed by vertical cross-sections, whereas $R$ is the radius of the circle on which the centers of the small circles lie.

The specific parametrization introduced above always gives a point lying on the torus with $R = 2$ and $r = 1$, since
\begin{align*}
    \left(\sqrt{x^{2} + y^{2}} - R\right)^{2} + z^{2} &= \left((\cos(qt) + 2) - 2\right)^{2} + z^{2} \\
    &= \cos^{2}(qt) + \sin^{2}(qt) \\
    &= 1.
\end{align*}

This parametrization thus gives a torus knot that actually lies on a torus, so we will use this parametrization. In Figure 1, we can see the result from this parametrization for the torus knot with $p = 2$ and $q = 3$.

\begin{figure}[ht]
\centering
\includegraphics[scale=0.45]{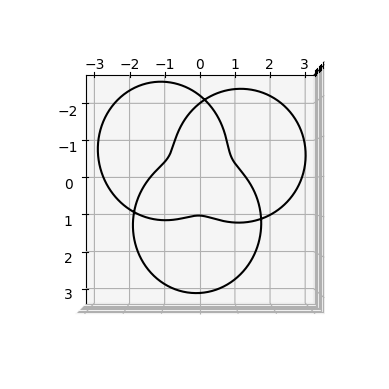}\hfill
\includegraphics[scale=0.35]{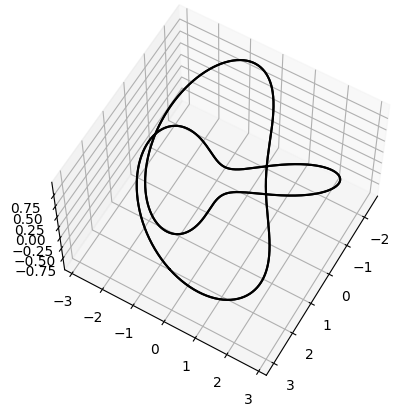}\hfill
\includegraphics[scale=0.45]{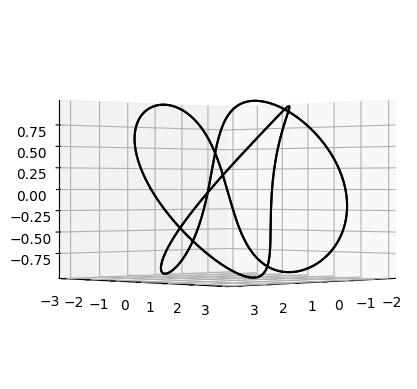}

\caption{A trefoil knot from the above parametrization at various angles}
\end{figure}

In Figure 2 is the $(3,8)$ torus knot, as a more complex example. Particularly, the parametrization gives very nice rotational symmetry which will be useful when calculating electric fields along the $z$-axis.
\begin{figure}[ht]
    \centering
    \includegraphics[scale=0.5]{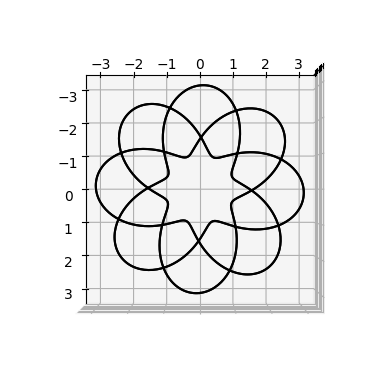}\hfill
    \includegraphics[scale=0.4]{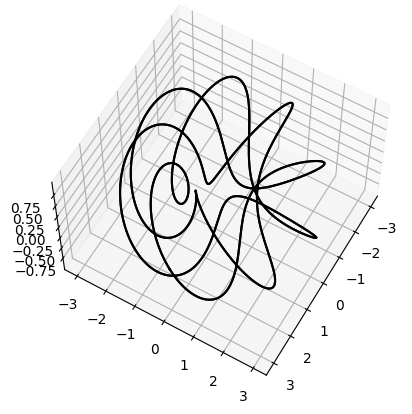}\hfill
    \includegraphics[scale=0.5]{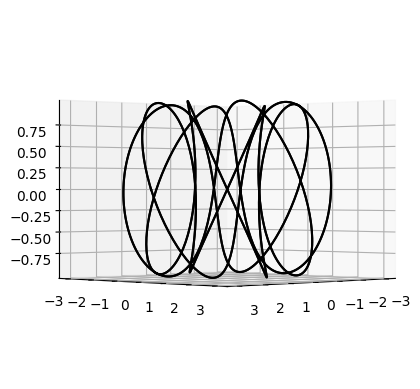}
    
    \caption{A (3,8)-torus knot from the above parametrization at various angles}
\end{figure}
 
\pagebreak

This parametrization is not symmetric in $p$ and $q$ which can create unexpected results in which, for example, a $(2,3)$ and $(3,2)$ torus knot look different. A $(3,2)$-torus knot, which is shown in figure 3, is isomorphic to the $(2,3)$ knot (they can be deformed to each other) but have very different physical properties.  In general, $(p,q)$ and $(q,p)$ are classically equivalent but are treated differently in our parametrization.

\begin{figure}[ht]
    \centering
    \includegraphics[scale=0.5]{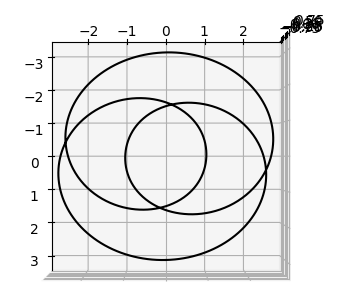}\hfill
    \includegraphics[scale=0.4]{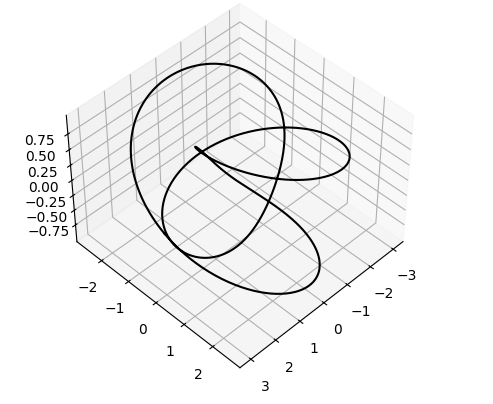}\hfill
    \includegraphics[scale=0.5]{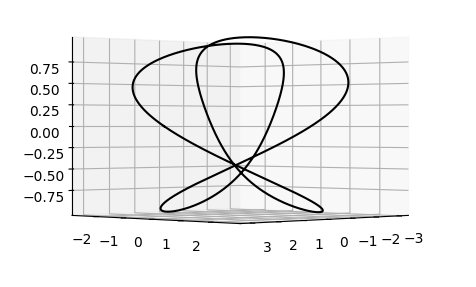}
    
    \caption{A (3,2)-torus knot from the above parametrization at various angles}
\end{figure}

Other specific parametrizations of the torus knots could be explored in the future, but this is the most intuitive one since it lies on a torus and is rotationally symmetric. 

\subsection{The Electric Potential}

The \emph{electric potential} to a point charge is proportional to $\frac{1}{r}$, where $r$ is the distance to the charge. Electric potential, denoted $\Phi$, is a scalar. Say that we have a curve $r(t)$, parametrized in terms of some variable $t$ which ranges from $0$ to $2\pi$. Since we are working with physical knots, we can assume the curve is in $\R^{3}$. Then, we may calculate the distance to some point $x$ in $\R^{3}$:
\begin{equation*}
    \Phi(x) = \int_{t = 0}^{2\pi} \frac{1}{|x - r(t)|}\ \mathrm{d}r = \int_{0}^{2\pi} \frac{|r'(t)|}{|x - r(t)|}\ \mathrm{d}t.
\end{equation*}
Note that the $|r'(t)|$ term is important, since the parametrizations above may not give constant $r'(t)$, so as $t$ increases, we may move along the knot at varying speeds. We include the term so that the integral is the same as long as the parametrization gives the same shape, since we are investigating the set defined by this curve and not the curve itself.

\subsection{The Electric Field}

The \emph{electric field} is defined as the gradient of the electric potential. We may differentiate inside the integral. If we let $x - r(t)$ have components $a_{1},a_{2},a_{3}$ then we obtain
\[\frac{\partial}{\partial a_{1}} \frac{1}{|x - r(t)|} = \frac{\partial}{\partial a_{1}} \frac{1}{\sqrt{a_{1}^{2} + a_{2}^{2} + a_{3}^{2}}} = -\frac{a_{1}}{(a_{1}^{2} + a_{2}^{2} + a_{3}^{2})^{3/2}}.\] We can find a similar expression for the $a_{2}$ and $a_{3}$ components of the gradient, which tells us
\begin{equation*}
    \nabla \Phi(x) = \int_{0}^{2\pi} \frac{(x - r(t))}{|x - r(t)|^{3}}|r'(t)| \ \mathrm{d}t.
\end{equation*}

A natural question to ask is as follows:
\begin{question}
    From a $(p,q)-$torus knot of uniform charge, where is the electric field zero?
\end{question}
Note that this is equivalent to finding $x$ such that the above integral is zero.


Another interesting question is:
\begin{question}
    How does the electric field at certain points change if $p$ or $q$ are increased?
\end{question}
This question does not require $p$ and $q$ to be relatively prime, as $p$ and $q$ sharing a factor only means that the resulting shape may not be one connected knot. Asking about its electric field is still valid.

To take advantage of symmetry, it is interesting to consider to only consider points on the $z$-axis. This gives several questions containing the behavior of the electric field in terms of $z$, and comparisons of the electric field between different $p$ and $q$.

Work has been done on this problem (\cite{lipl}) through simulations and theoretical bounds, but we will offer some new observations and methods.

\section{Branch cuts in Complex Analysis}

Complex analysis is a field that studies functions in the complex plane. The integrals from the previous section can be embedded in the complex plane, so methods in complex analysis are useful for evaluating them. In this section, we state some well-known theorems in complex analysis on \emph{analytic} functions that reduce complex integrals to simpler calculations.

We shall assume that the reader is familiar with basic terms in complex analysis.

In our particular problem, square roots are common. However, in the complex plane, there are two possible values for square roots, and therefore we must make more careful considerations.

\begin{definition}
    In the following sections, unless specified otherwise, the square root of a complex number will be the square root with argument in $[0,\pi]$. That is, the square root of $re^{i\theta}$ where $r$ is nonnegative and $\theta \in [0,2\pi)$ is defined as $\sqrt{r}e^{\frac{i\theta}{2}}$. 
\end{definition}
We must make note throughout all our calculations that no sign errors are introduced.

The following notions help us deal with functions with multiple values. Note that these definitions are more specific than the conventional definitions because our functions only have two possible values.
\begin{definition}[\cite{branch}]
    Consider a function $f(z)$ on the complex plane. A \emph{branch point} is a point that cannot be encircled by a curve $C$ such that $f$ is continuous and single-valued along $C$.
\end{definition}

\begin{definition}[\cite{branch}]
    A complex function $f(z)$ has a branch point at $\infty$ if $f(\frac{1}{z})$ has a branch point at $0$.
\end{definition}

Intuitively, a branch point causes the ``ambiguous'' part of the function (logarithms, roots, etc.) to go to $0$ or $\infty$. We also state the following convention:

For example, in the function $f(z) = \sqrt{z}$ on the complex plane, the two branch points are $0$ and the point at infinity.

If we want to define contour integrals in the plane, we must introduce \emph{branch cuts}.
\begin{definition}[\cite{branch}]
    A complete set of \emph{branch cuts} is a set of cuts in the plane with endpoints on branch points such that each branch point is an endpoint of a cut and every curve in the plane encloses all or none of the branch points.
\end{definition}

Contours must then go around branch cuts. A possible contour for the function $f(z) = \sqrt{z}$ is shown below. In this case, the branch cut is a segment from $0$ to $\infty$. Thus, if we wanted to integrate the function over the unit circle, a possible contour is the following:
\begin{figure}[ht]
    \centering
    \includegraphics[scale=0.6]{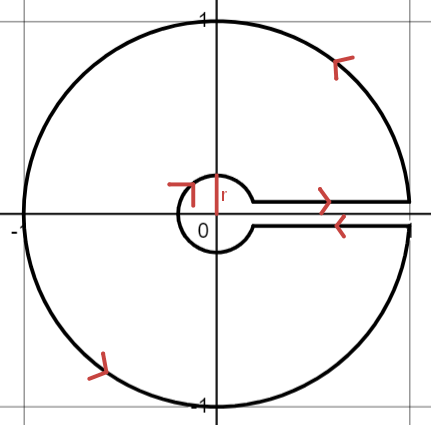}
    \caption{A possible contour for $\sqrt z$}
\end{figure}

\section{Numerical Analysis of Fields along the z-axis}

To use symmetry and get a better understanding of the electric field from knots, we may consider the electric field along the z-axis. In this section, we analyze the electric potential and electric field from the knot along the z-axis, as well as the length of the knot. Here, we focus on analysis using numerical methods and approximations, as well as some elementary calculations.

\subsection{Fields in the x and y directions}

We first deal with the $x$ and $y$ directions. We can deal with both cases simultaneously by considering the integral
\[\int_{0}^{2\pi} \frac{re^{pi\phi}(q^{2} + p^{2}r^{2})^{1/2}}{(\alpha^{2} + 2\alpha \sin(q\phi) + 5 + 4\cos(q\phi))^{3/2}} \ \mathrm{d}\phi,\] where the real and imaginary parts become the $x$ and $y$ components, repectively. To evaluate the integral, we realize that everything except the $e^{pi\phi}$ factor is periodic every $\frac{2\pi}{q}$. Thus, the integral is equal to
\[\left(1 + e^{\frac{2i\pi p}{q}} + e^{\frac{4i\pi p}{q}} + \cdots + e^{\frac{2(q - 1)i\pi p}{q}}\right)\int_{0}^{\frac{2\pi}{q}}  \frac{r(q^{2} + p^{2}r^{2})^{1/2}}{(\alpha^{2} + 2\alpha \sin(q\phi) + 5 + 4\cos(q\phi))^{3/2}}e^{pi\phi} \ \mathrm{d}\phi.\]
Noting now that $\gcd(q,p)$ must be $1$, the factor on the left just sums over all $q$th roots of unity, which gives $0$. Thus, the integral evaluates to $0$ so there is no component of the electric field in the $x$ or $y$ direction.

This gives us our first main result:
\begin{prop}
    The electric field from a point along the $z$-axis is parallel to the $z$-axis.
\end{prop}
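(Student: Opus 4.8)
The plan is to evaluate the field at an arbitrary point $x = (0,0,\alpha)$ on the $z$-axis and show that its $x$- and $y$-components both vanish, leaving only a $z$-component. Starting from $\nabla\Phi(x) = \int_0^{2\pi}\frac{x - r(t)}{|x - r(t)|^{3}}\,|r'(t)|\,\mathrm{d}t$, I would first substitute the explicit parametrization and record the two scalar quantities that appear. Using $\cos^2(qt)+\sin^2(qt)=1$ exactly as in the verification that the knot lies on the torus, the squared distance is $|x-r(t)|^{2} = \alpha^{2} + 2\alpha\sin(qt) + 5 + 4\cos(qt)$, and a direct differentiation gives the speed $|r'(t)| = \bigl(q^{2} + p^{2}(\cos(qt)+2)^{2}\bigr)^{1/2}$, i.e. $(q^2+p^2r^2)^{1/2}$ with $r = \cos(qt)+2$.

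The key idea is to package the two horizontal components into a single complex-valued integral. Since the horizontal part of $x - r(t)$ is $-(\cos(qt)+2)(\cos(pt),\sin(pt))$, writing the $x$-component as the real part and the $y$-component as the imaginary part yields $(x-r)_x + i\,(x-r)_y = -(\cos(qt)+2)e^{ipt} = -r\,e^{ipt}$. This reproduces exactly the integrand displayed above (the overall sign is immaterial since we are proving vanishing), with the horizontal direction carried entirely by the factor $e^{ip\phi}$. Establishing this identification cleanly is the first step, and it reduces the proposition to showing that this one complex integral is $0$.

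The crux is a periodicity/symmetry argument. I would observe that every factor in the integrand \emph{except} $e^{ip\phi}$ depends on $\phi$ only through $q\phi$, and is therefore invariant under the shift $\phi \mapsto \phi + 2\pi/q$, whereas $e^{ip\phi}$ merely picks up the constant factor $e^{2\pi i p/q}$. Splitting $\int_0^{2\pi}$ into the $q$ subintervals $[2\pi k/q,\,2\pi(k+1)/q]$ and changing variables on each, the common periodic weight factors out and leaves the geometric sum $\sum_{k=0}^{q-1} e^{2\pi i p k/q}$ multiplying a single integral over one period. Because $\gcd(p,q)=1$ (and $q\ge 2$ for a genuine knot), $e^{2\pi i p/q}$ is a nontrivial $q$-th root of unity, so this sum ranges over all $q$-th roots of unity and equals $0$. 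Hence both horizontal components vanish and the field is parallel to the $z$-axis.

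The main obstacle I anticipate is bookkeeping rather than conceptual: confirming the complex packaging of the two real components with the correct sign, and checking that the periodic weight is genuinely identical on each subinterval so that the geometric sum factors out exactly. I would also flag the degenerate case $q=1$, where the sum is the single term $1$ and the argument produces no cancellation; there the $q$-fold rotational symmetry about the $z$-axis that drives the result is absent, so the statement should be understood for $q \ge 2$.
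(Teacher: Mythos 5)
Your proposal is correct and takes essentially the same route as the paper: package the two horizontal components as the single complex integral $\int_0^{2\pi} r\,e^{ip\phi}(q^2+p^2r^2)^{1/2}\,(\alpha^2+2\alpha\sin(q\phi)+5+4\cos(q\phi))^{-3/2}\,\mathrm{d}\phi$, note that every factor except $e^{ip\phi}$ is $\tfrac{2\pi}{q}$-periodic, and factor out the sum of $q$-th roots of unity, which vanishes since $\gcd(p,q)=1$. Your closing caveat about $q=1$ is a genuine refinement the paper omits: there the root-of-unity sum equals $1$ rather than $0$, and the horizontal field of a $(p,1)$ curve need not vanish, so the statement should indeed be read with $q\ge 2$.
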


\subsection{Field in the z-direction}

The most interesting component of the electric field is thus in the $z$-direction. 

If we set $x = y = 0$, we consider the electric field at at the point $(0,0,\alpha)$. It is equal to the following integral:
\[\int_{0}^{2\pi} \frac{(0,0,\alpha) - (r_{x},r_{y},r_{z})}{(r_{x}^{2}+r_{y}^{2}+(\alpha - r_{z})^{2})^{3/2}}|r'(t)| \ \mathrm{d}t.\]

This simplifies to
\[\int_{0}^{2\pi} \frac{(0,0,\alpha) - (r_{x},r_{y},r_{z})}{(\alpha^{2} + 2\alpha\sin(q\phi) + 5 + 4\cos(q\phi))^{3/2}}(q^{2} + p^{2}r^{2})^{1/2} \ \mathrm{d}\phi.\] 

If we substitute $u = q\phi$, the integral becomes
\[\int_{0}^{2\pi q} \frac{(\alpha - \sin(u))(q^{2} + p^{2}(2 + \cos(u))^{2})^{1/2}}{q(\alpha^{2} + 2\alpha\sin(u) + 5 + 4\cos(u))^{3/2}}\ \mathrm{d}u.\] Since the integral is periodic every $2\pi$, we can just consider it from $0$ to $2\pi$. Now, we may substitute $\omega = e^{iu}$, so $d\omega = ie^{iu}du$. Thus, we want
\[C\int_{|\omega| = 1} \frac{\left(\alpha - \frac{\omega - \frac{1}{\omega}}{2i}\right)\left(q^{2} + 4p^{2} + 2p^{2}\left(\omega + \frac{1}{\omega}\right) + \frac{p^{2}}{2}\left(\omega + \frac{1}{\omega}\right)^{2}\right)^{1/2}}{i\omega\left(\alpha^{2} + \alpha\left(\frac{i}{\omega} - i\omega\right) + 5 + 2\omega + \frac{2}{\omega}\right)^{3/2}} \ \mathrm{d}\omega = 0.\]

We may multiply top and bottom by $\omega^{3/2}$. Since both items in the square roots are currently real, there are no sign issues with multiplying both expressions by $\omega$. The demoniator is now
\[\omega(\omega^{2}(2 - \alpha i) + \omega(5 + \alpha^{2}) + (2 + \alpha i))^{3/2}.\] The quadratic term has roots $\omega = -\alpha i - 2$ and $\omega = \frac{1}{\alpha i - 2} = -\frac{\alpha i + 2}{\alpha ^{2} + 4}$ which can be found by the quadratic formula. Clearly (as $\alpha$ is real) only the latter is inside the $|\omega| = 1$ circle. 

We thus want
\[\int_{|\omega| = 1} \frac{(\omega^{2} - 2i\alpha \omega - 1)\sqrt{\left(q^{2} + p^{2}\left(\frac{\omega^{2}}{2} + 2\omega + \frac{1}{2}\right)^{2}\right)}}{2\omega^{3/2}\left((2-\alpha i)\left(\omega + 2 + \alpha i\right)\left(\omega + \frac{\alpha i + 2}{\alpha^{2} + 4}\right)\right)^{3/2}} \ \mathrm{d}\omega.\]

Finding this integral is still difficult, but progress can be made. The branch points are known: the denominator gives $-\frac{\alpha i + 2}{\alpha^{2} + 4}$ and $-2 - \alpha i$ as branch points, the former one being in the unit circle. The numerator gives $\omega + \frac{1}{\omega} = \pm 2\frac{q}{p}i - 4$.

We observe that $p$ and $q$ only appear in one term in the numerator. Particularly, we may notice that the only term containing $p,q$ is the $\sqrt{\frac{p^{2}}{2}\omega^{2} + \omega(q^{2}+2p^{2}) + \frac{p^{2}}{2}}$. If both $p$ and $q$ are scaled by some constant $c$, the entire integral is thus also scaled by $c$. Thus, only the ratio of $p$ and $q$ matters. It makes sense now to drop the condition that $p$ and $q$ are integers and vary $\frac{p}{q}$ freely.






\subsection{Critical points of the electric potential along the z-axis}

We show with an elementary calculation that the electric potential is maximized only at zero, and thus the electric field zero only at zero. This is another one of our main results.
\begin{theorem}
    The electric field along the $z$-axis from a torus knot is zero only at the origin.
\end{theorem}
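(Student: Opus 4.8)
The plan is to turn the vanishing of the field into a one-variable question about the on-axis potential. By the Proposition, at a point $(0,0,\alpha)$ the field $\nabla\Phi$ has no $x$- or $y$-component, so it is zero exactly when its $z$-component $\frac{d}{d\alpha}\Phi(0,0,\alpha)$ is. Writing $\Phi(\alpha)$ for the potential restricted to the axis, the substitution $u=qt$ together with the periodicity of the resulting integrand exploited above gives, up to a positive constant,
\[ \Phi(\alpha)=\int_0^{2\pi}\frac{g(u)}{\sqrt{(2+\cos u)^2+(\alpha+\sin u)^2}}\,du,\qquad g(u)=\sqrt{q^2+p^2(2+\cos u)^2}>0. \]
Thus the theorem is equivalent to showing that $\Phi'(\alpha)=0$ only at $\alpha=0$, and I would attack this by pinning down the sign of $\Phi'(\alpha)$.

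First I would record the symmetry $\alpha\mapsto-\alpha$, $u\mapsto 2\pi-u$, which fixes the integrand; hence $\Phi$ is even, $\Phi'(0)=0$, and it suffices to prove $\Phi'(\alpha)<0$ for every $\alpha>0$ (evenness then forces $\Phi'(\alpha)>0$ for $\alpha<0$, so $0$ is the only zero). Differentiating under the integral and folding $[0,2\pi]$ onto $[0,\pi]$ by $u\mapsto2\pi-u$ collects the contributions with $\pm\sin u$:
\[ -\Phi'(\alpha)=\int_0^{\pi}g(u)\left[h(\alpha+\sin u)+h(\alpha-\sin u)\right]du,\qquad h(x)=\frac{x}{\left((2+\cos u)^2+x^2\right)^{3/2}}. \]
Since $g>0$, everything reduces to showing that the bracket is positive for each $u\in(0,\pi)$ and each $\alpha>0$.

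Write $A=2+\cos u$ and $s=\sin u$. If $\alpha\ge s$ the bracket is positive trivially, since both arguments of $h$ are nonnegative and $h\ge 0$ on $[0,\infty)$. The real work is the window $0<\alpha<s$, where $h(\alpha-s)<0$ and I must prove $h(\alpha+s)>h(s-\alpha)$. Clearing denominators and squaring, this becomes $a^2(A^2+b^2)^3>b^2(A^2+a^2)^3$ with $a=\alpha+s$, $b=s-\alpha$; the difference factors as $(a^2-b^2)\big[A^6-3A^2a^2b^2-a^2b^2(a^2+b^2)\big]$, and since $a^2-b^2=4\alpha s>0$ the inequality is equivalent to
\[ A^6>(s^2-\alpha^2)^2\left(3A^2+2s^2+2\alpha^2\right). \]
A short computation shows the right side is a decreasing function of $\alpha$ on $(0,s)$, hence largest as $\alpha\to0$, so it is enough to verify $A^6>s^4(3A^2+2s^2)$, i.e. the one-variable polynomial inequality $(2+\cos u)^6>(1-\cos^2u)^2\big(3(2+\cos u)^2+2(1-\cos^2 u)\big)$ on $\cos u\in[-1,1]$, which holds with room to spare.

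The main obstacle is exactly this positivity on $0<\alpha<s$, and I would flag it as \emph{genuinely geometric} rather than routine. Its infinitesimal form is the paired second-derivative test $A^2>2s^2$, that is $3\cos^2u+4\cos u+2>0$; this holds for all $u$ only because the torus is thin enough, $R>r\sqrt3$, which the parametrization just satisfies ($R=2>\sqrt3=r\sqrt3$). For a fatter tube the bracket $h(\alpha+s)+h(\alpha-s)$ could turn negative near $\alpha=0$ and $\Phi$ would gain maxima away from the centre, so the statement would fail. Upgrading this infinitesimal condition to the global bound $A^6>s^4(3A^2+2s^2)$ is elementary algebra, but it is the step that actually uses the explicit constants $R=2$, $r=1$, so I would treat it as the crux of the proof rather than a formality.
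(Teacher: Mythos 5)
Your proof is correct, and while it shares the paper's overall skeleton --- reduce to the $\alpha$-derivative of the on-axis potential, pair $u$ with $-u$, and prove a pointwise statement in $\alpha$ for each fixed $u$ --- the execution of the key step is genuinely different. The paper sets the derivative of the paired integrand to zero, squares the \emph{equation} (introducing extraneous roots that must then be tracked via the sign condition $|\alpha|<m/2$), and excludes roots of the resulting cubic $Q$ on $[0,m/2]$ by estimating $Q(0)$, $Q(m/2)$ and $Q'$ with AM--GM. You instead prove a strict inequality: you split off the trivial regime $\alpha\ge\sin u$, and in the window $0<\alpha<\sin u$ you square an inequality between two \emph{positive} quantities (so no extraneous solutions arise), exploit the factorization
\[
a^{2}(A^{2}+b^{2})^{3}-b^{2}(A^{2}+a^{2})^{3}=(a^{2}-b^{2})\left[A^{6}-3A^{2}a^{2}b^{2}-a^{2}b^{2}(a^{2}+b^{2})\right],
\]
and use monotonicity in $\alpha$ to reduce everything to the single inequality $A^{6}\ge s^{4}(3A^{2}+2s^{2})$. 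That inequality does hold, and in fact your ``infinitesimal'' and ``global'' conditions coincide: with $t=s^{2}/A^{2}$ it reads $t^{2}(3+2t)\le 1$, and since $2t^{3}+3t^{2}-1=(2t-1)(t+1)^{2}$, it is equivalent to $A^{2}\ge 2s^{2}$, i.e.\ to $3\cos^{2}u+4\cos u+2\ge 0$, which has negative discriminant. Notably this is the very same quadratic the paper hits (its claim that $4n-3m^{2}>0$ amounts to $4(3\cos^{2}u+4\cos u+2)>0$), so both arguments bottom out at the same fact; but your route avoids the extraneous-root bookkeeping and the AM--GM case analysis, yields the strict sign $\Phi'(\alpha)<0$ for $\alpha>0$ directly (the paper's ``no critical points away from $0$'' statement needs a small extra continuity argument to recover that sign), and isolates the geometric content of the constants: the proof works precisely because $R=2>\sqrt{3}\,r$. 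The only thing to tighten in a final write-up is to replace ``holds with room to spare'' by the two-line verification above.
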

\begin{proof}
    At the point $(0,0,\alpha)$, the electric potential from a $(p,q)$-torus knot is
    \[\int_{0}^{2\pi} \sqrt{\frac{q^{2} + p^{2}(2 + \cos(qt))^{2}}{(\cos(qt) + 2)^{2}(\cos^{2}(pt) + \sin^{2}(pt)) + (\alpha + \sin(qt))^{2}}} \ \mathrm{d}t = \int_{0}^{2\pi} \sqrt{\frac{q^{2} + p^{2}(2 + \cos u)^{2}}{\alpha^{2} + 2\alpha \sin(u) + 5 + 4\cos(u)}} \ \mathrm{d}u.\] Here, the $\frac{1}{q}$ of the $u$-substitution cancels with the changed integrals of the limit from $0$ to $2\pi q$, since $q$ is an integer and the integrand is periodic with period $2\pi$.

    We claim the maximum of this integral occurs at $\alpha = 0$ only. We will consider $f(u)$ to be the integrand. Particularly, we now claim the maximum of $f(u) + f(-u)$ is at $0$ for any $u \in [0,2\pi]$. Note that $f$ is periodic with $f(u) = f(2\pi + u)$, so the desired integral is equal to $\int_{-\pi}^{\pi}f(u)\ \mathrm{d}u = \int_{0}^{\pi}f(u) + f(-u) \ \mathrm{d}u$.

    First, we notice that the numerator, $\sqrt{q^{2} + p^{2}(2 + \cos u)^{2}}$ is the same as $\sqrt{q^{2} + p^{2}(2 + \cos(-u))^{2}}$ and is always positive. Thus, to find the maximum value of $f(u) + f(-u)$, it simply suffices to find the maximum of
    \[\frac{1}{\sqrt{\alpha^{2} + 2\alpha \sin(u) + 5 + 4\cos(u)}} + \frac{1}{\sqrt{\alpha^{2} - 2\alpha \sin(u) + 5 + 4\cos(u)}}.\] Let $m = 2\sin(u)$ and $n = 5 + 4\cos(u)$. 

    Then, we want to find the maxima of
    \[g(\alpha) := \frac{1}{\sqrt{\alpha^{2} + m\alpha + n}} + \frac{1}{\sqrt{\alpha^{2} - m\alpha + n}}.\] We want to set the derivative of $g$ to $0$, and we find
    \[g'(\alpha) = \frac{2\alpha + m}{(\alpha^{2} + m\alpha + n)^{3/2}} + \frac{2\alpha - m}{(\alpha^{2} - m\alpha + n)^{3/2}}\] Setting this function to zero, we want
    \[(\alpha^{2} - m\alpha + n)^{3}(2\alpha + m)^{2} = (\alpha^{2} + m\alpha + n)^{3}(2\alpha - m)^{2},\] which is obtained by squaring both sides and gathering terms. Notice that, since we squared both sides, some values of $\alpha$ satisfying this equation will not satisfy the original equation. We must be careful to deal with these cases in the end. 

    Notice that the two sides differ only in the sign of terms involving $m$. Thus, all terms with an even power of $m$ will cancel. Thus, we merely need to find the terms with an odd power of $m$. 
    \begin{itemize}
        \item The only term with a $m^{5}$ is $(-m\alpha)^{3}(m)^{2} = -m^{5}\alpha^{3}$.
        \item In a term with a $m^{3}$, we could have all $m$s coming from the $(\alpha^{2} - m\alpha + n)^{3}$ factor, giving us the term $-(m\alpha)^{3}(2\alpha)^{2} = -4m^{3}\alpha^{5}$.
        \item In a term with $m^{3}$, we could have two $m$s coming from the $(\alpha^{2} - m\alpha + n)^{3}$ factor and one from the $(2\alpha + m)^{2}$ factor, giving us $6m^{3}\alpha^{2}(\alpha^{2} + n)(2\alpha)$. The factor of $6$ comes from choosing which factor to take each term from.
        \item In a term with $m^{3}$, we could have one $m$ coming from the $(\alpha^{2} - m\alpha + n)^{3}$ factor and two from the $(2\alpha + m)^{2}$ factor, giving us $-3m^{3}\alpha(\alpha^{4} + 2\alpha^{2} n + n^{2})$.
        \item In a term with only one $m$, if the $m$ comes from $(\alpha^{2} - m\alpha + n)$, we get a term of $-3m\alpha(4\alpha^{2})(\alpha^{4} + 2n\alpha^{2} + n^{2})$.
        \item In a term with only one $m$, if the $m$ comes from $2\alpha + m$, we get $2(2\alpha)(\alpha^{6} + 3\alpha^{4}n + 3\alpha^{2}n^{2} + 3n^{3})$.
    \end{itemize}
    Combining these terms gives
    \begin{align*}
        0 = &-m^{5}\alpha^{3} \\
        &+ m^{3}(-4\alpha^{5} + 6(2\alpha^{5} + 2\alpha^{3}n) - 3(\alpha^{5} + 2\alpha^{3}n + \alpha n^{2})) \\
        &+ m(-12\alpha^{3}(\alpha^{4} + 2n\alpha^{2} + n^{2}) + 4\alpha(\alpha^{6} + 3\alpha^{4}n + 3\alpha^{2}n^{2} + 3n^{3})) 
    \end{align*}
    In fact, subtracting the two sides of the equation gave an extra factor of $2$, which we have removed.

    This evaluates to
    \[-m^{5}\alpha^{3} + m^{3}\alpha^{5} + 6m^{3}n\alpha^{3} - 3m^{3}\alpha n^{2} - 8m\alpha^{7} - 12mn\alpha^{5} + 4mn^{3}\alpha = 0\]
    We can factor out $\alpha$ and $m$ to get
    \[-8\alpha^{6} + \alpha^{4}(5m^{2} - 12n) + \alpha^{2}(6m^{2}n - m^{4}) + 4n^{3} - 3m^{2}n^{2} = 0.\] Call this polynomial $P(\alpha)$. Then, the equation is satisfied for roots of $P$.

    We must, however, deal with extraneous solutions introduced by squaring the equation. Particularly, to satisfy the original equation, the two terms in $g'(\alpha)$ must have opposite sign. Thus, $2\alpha + m$ and $2\alpha - m$ must have opposite sign, since the $(\alpha^{2} - m\alpha + n)^{3/2}$ and $(\alpha^{2} + m\alpha + n)^{3/2}$ terms are always positive. This means that $|\alpha| < \frac{m}{2}$. Notice also that $\frac{m}{2} = \sin(u) \le 1$. It is also nonnegative, since $0 \le u \le \pi$ Define a new polynomial by $Q(\alpha^{2}) = P(\alpha)$, or
    \[Q(\alpha) := -8\alpha^{3} + \alpha^{2}(5m^{2} - 12n) + \alpha(6m^{2}n - m^{4}) + 4n^{3} - 3m^{2}n^{2}.\] The roots of $P$ are the square roots of roots of $Q$. If a root of $Q$ was negative, it would not correspond to a root of $P$. If a root of $Q$ was greater than $1$, its square roots have absolute value greater than $1$, and thus greater than $\frac{m}{2}$. If a root of $Q$ was in $\left(\frac{m}{2},1\right)$, then its square roots would still have absolute value greater than $\frac{m}{2}$. Thus, we simply need to check that $Q$ has no roots between $0$ and $\frac{m}{2}$, inclusive. We will do by looking at $Q(0)$ and $Q\left(\frac{m}{2}\right)$ and showing that $Q$ cannot cross the $x$-axis between those two points.

    In the following calculations, we will make use of the $AM-GM$ inequality. It states that if $a_{1},a_{2},\ldots,a_{k}$ is some finite sequence of positive real numbers, then
    \[\frac{a_{1} + a_{2} + \cdots + a_{k}}{k} \ge \sqrt[k]{a_{1}a_{2}\cdots a_{k}},\] that is, the arithmetic mean is greater than the geometric mean. Equality holds only when $a_{1} = a_{2} = \cdots = a_{k}$.

    We first find that $Q(0) = 4n^{3} - 3m^{2}n^{2}$. We claim that this is positive. It suffices to prove that $4n - 3m^{2}$ is positive. Particularly, this is equal to $4(5 + 4\cos(k) - 3\sin^{2}(k)) = 4(2 + 4\cos(k) + 3\cos^{2}(k))$. Note that the polynomial $3x^{2} + 4x + 2$ has minimum at $x = -\frac{2}{3}$, giving a value of $\frac{2}{3}$, which means that the original function has a minimum value of $\frac{8}{3}$, which is indeed positive.

    Now, we look at $Q\left(\frac{m}{2}\right)$. We get \[-m^{3} + \frac{5m^{4}}{4} - 3m^{2}n + 3m^{3}n - \frac{m^{5}}{2} + 4n^{3} - 3m^{2}n^{2}.\] We claim this is positive. It suffices to show that \[-m^{3} + \frac{5m^{4}}{4} - 3m^{2}n + 3m^{3}n - \frac{m^{5}}{2} + \frac{8}{3}n^{2}\] is positive. Treating this expression as a quadratic in $n$, we note that it is an upward-facing parabola. Since $n = 5 + 4\cos(k) \ge 1$, we just want to show that the quadratic has no roots $n$ that are greater than $1$. Plugging in $1$ for $n$ gives $2m^{3} + \frac{3m^{4}}{4} - 6m^{2} + 4$. Since $m = 2\sin(k)$ and $0 \le k \le \pi$, $m$ is positive. Thus, we can use the AM-GM inequality to show
    \[2m^{3} + \frac{3m^{4}}{4} + 4 = 2 + 2 + x^{3} + x^{3} + \frac{3x^{4}}{4} \ge 5\sqrt[5]{3}m^{2} > 6m^{2}.\] Thus, the quadratic is positive at $1$. Now, notice that the vertex of the quadratic is at $\frac{9m^{2} - 9m^{3}}{4}$. Since $0 < m < 2$, we find that this is less than $1$ by the AM-GM inequality, which gives $\frac{4}{9} + m^{3} = \frac{4}{9} + \frac{m^{3}}{2} + \frac{m^{3}}{2} \ge \sqrt[3]{3}m^{2}$. Thus $Q\left(\frac{m}{2}\right)$ is also positive.

    Now we look at $Q'(\alpha)$ from $0$ to $\frac{m}{2}$. We have $Q'(\alpha) = -24\alpha^{2} + 2\alpha(5m^{2} - 12n) + 6m^{2}n - m^{4}$. At $0$, this is $6m^{2}n - m^{4}$. We already showed above that $4n - 3m^{2}$ is positive, so $6n - m^{2}$ is also positive (as $n$ is positive), so the only way $Q$ could have a root between $0$ and $\frac{m}{2}$ is if $Q'$ went positive, negative, and positive again in that interval (it cannot turn more than twice as $Q'$ is a quadratic). However, $Q'\left(\frac{m}{2}\right) = -6m^{2} + 5m^{3} - 12mn + 6m^{2}n - m^{4}$. Since $0 \le m \le 2$, both $-6m^{2} + 5m^{3} - m^{4}$ and $-12m + 6m^{2}$ are nonpositive, so this is also nonpositive, as $n$ is always positive. Thus $Q$ has no root in that interval so we are done.
\end{proof}

\subsection{Numerical Calculations of Integrals}

We also wrote code to calculate the integrals of complex functions such as the electric potential, electric field, and length over contours in the complex plane. Our code solves complex integrals over the unit circle by summing integrands over small pieces of the contour (akin to Riemann integration). 

We also added the ability to find maxima of integrals. This was done by assuming there is one maximum on positive inputs, then starting from $0$ and adding $1$ until reaching a local maximum, then adding or subtracting $0.5$, then $\frac{1}{4}$, and so on. As the jumps become smaller, the value converges on a local maximum. 

The code is available at \url{https://github.com/HenrySTEM/Electric-Potential-Torus-Knots}.

\subsection{Numerical Observations}

From numerical calculations, we have found some observations and calculations which could be proven rigorously in future research.

First, we found the shape of the electric field graph along the $z$-axis. The graph in Figure 5 shows the electric field at various $z$-values for $p = 3$ and $q = 2$.
\begin{figure}[ht]
    \centering
    \includegraphics[scale=0.6]{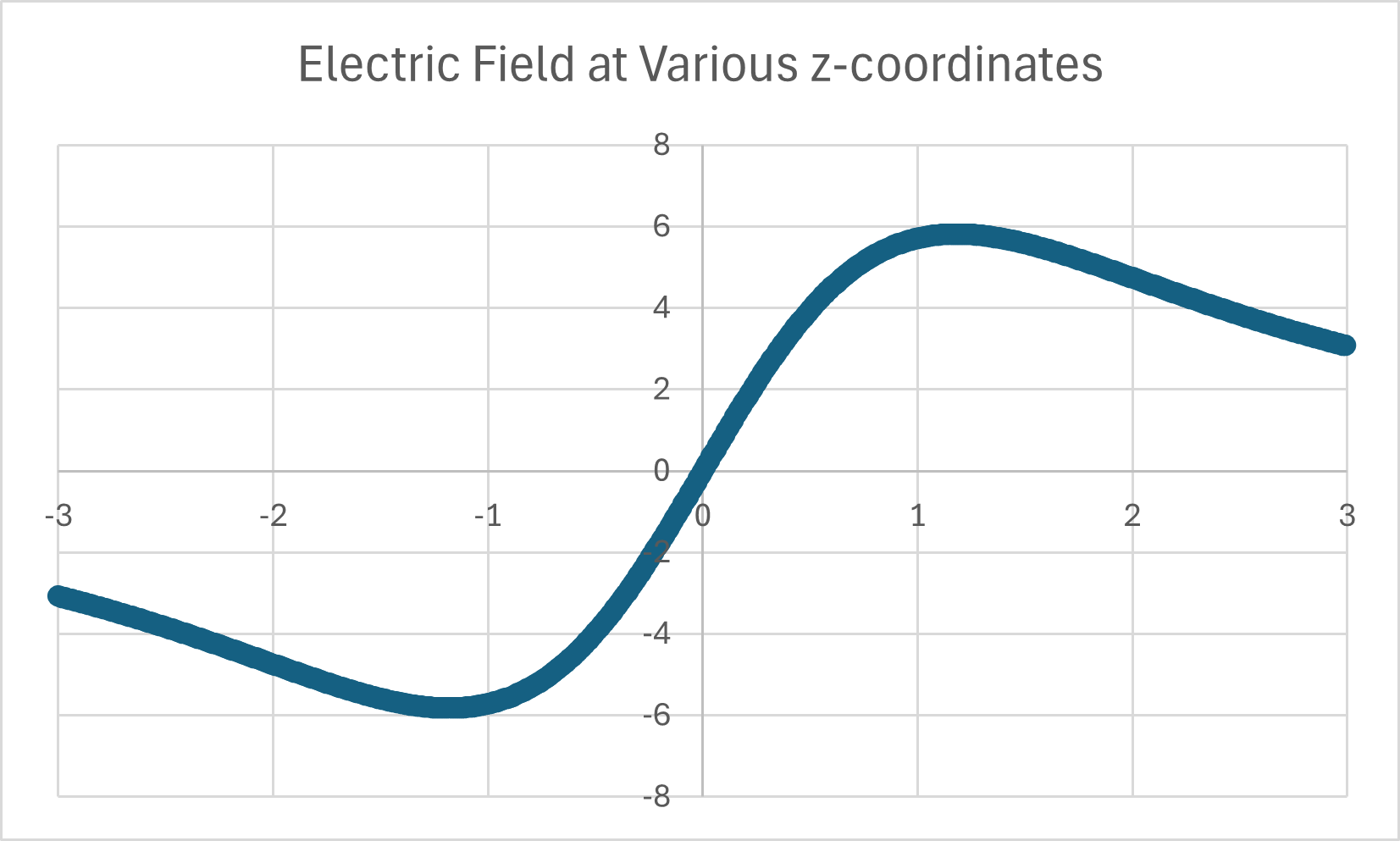}
    \caption{Electric field at $z$-coordinates from $-3$ to $3$ for the trefoil}
\end{figure}

The function is odd, which is not surprising because of the symmetries of the trefoil. However, an interesting question is
\begin{question}
    Is there only one maximum magnitude for the electric field (considering only positive z)?
\end{question}

Assuming this is true, we found locations of local maxima of the electric field. Recall that scaling $p$ and $q$ (together) do not change the location of the maximum. Since only the ratio of $p$ and $q$ determines what $z$-value gives the maximum electric field, can scale so that $p + q = 1$ and vary $p$ from $0$ to $1$ to make a plot. The results are shown in Figure 6.
\begin{figure}[ht]
    \centering
    \includegraphics[scale=0.8]{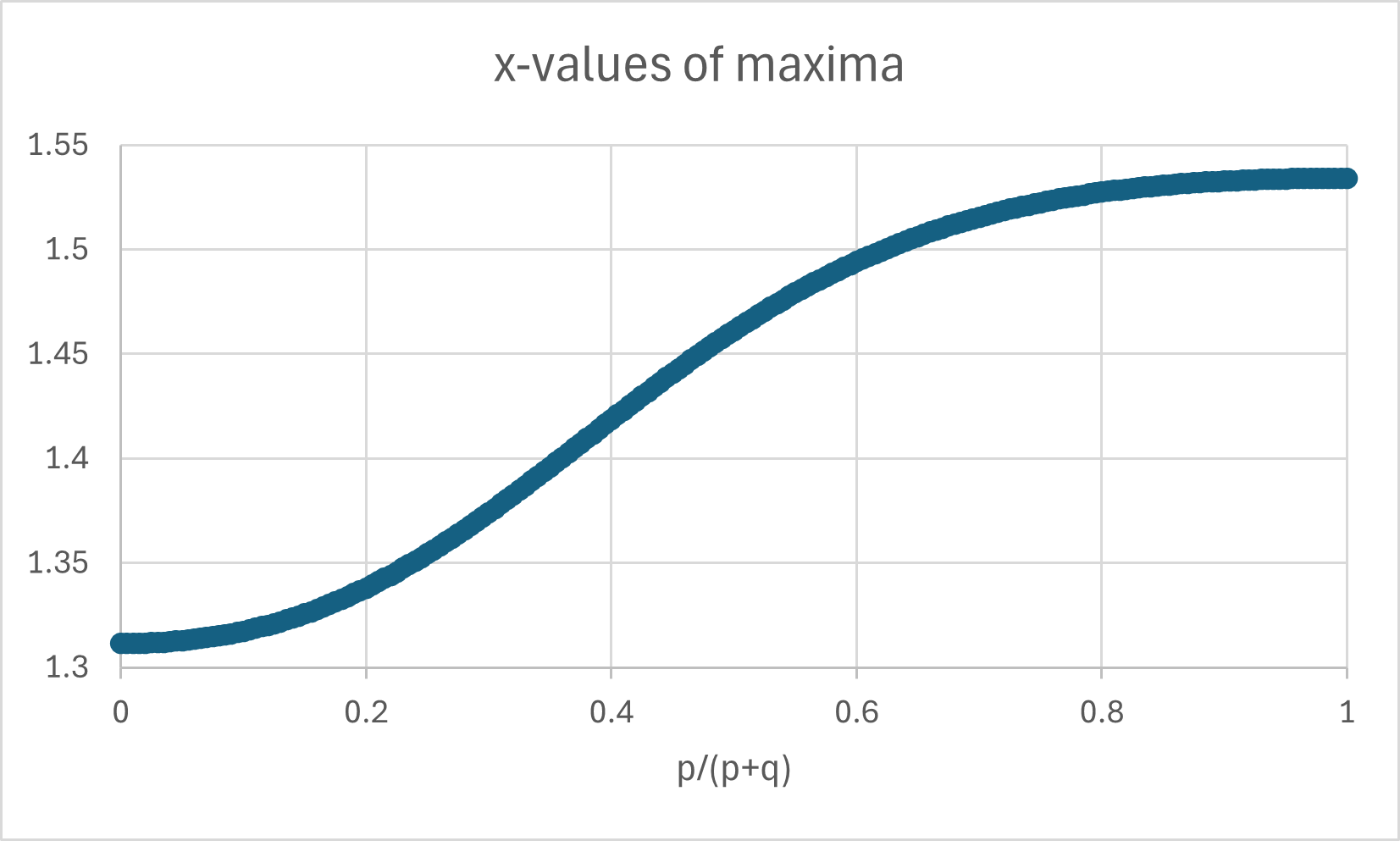}
    \caption{Location of maximum electric field for various $p$.}
\end{figure}

This gives a few questions:
\begin{question}
    Does the maximum electric field occur farther away when $p$ is increased with relation to $q$? The data suggest so.
\end{question}

\begin{question}
    What are the symmetries in the locations of the maxima?
\end{question}

Finally, we see if the strength of the electric field is stronger for certain torus knots compared to others. To check this, we divided by the length of the knot (also calculated by simulation) which gave a value for the electric fields at various points, such as $(0,0,1)$, which is shown in Figure 7. Again, only the ratio of $p$ to $q$ matters, so we assumed $p + q = 1$. 

\begin{figure}[ht]
    \centering
    \includegraphics[scale=0.8]{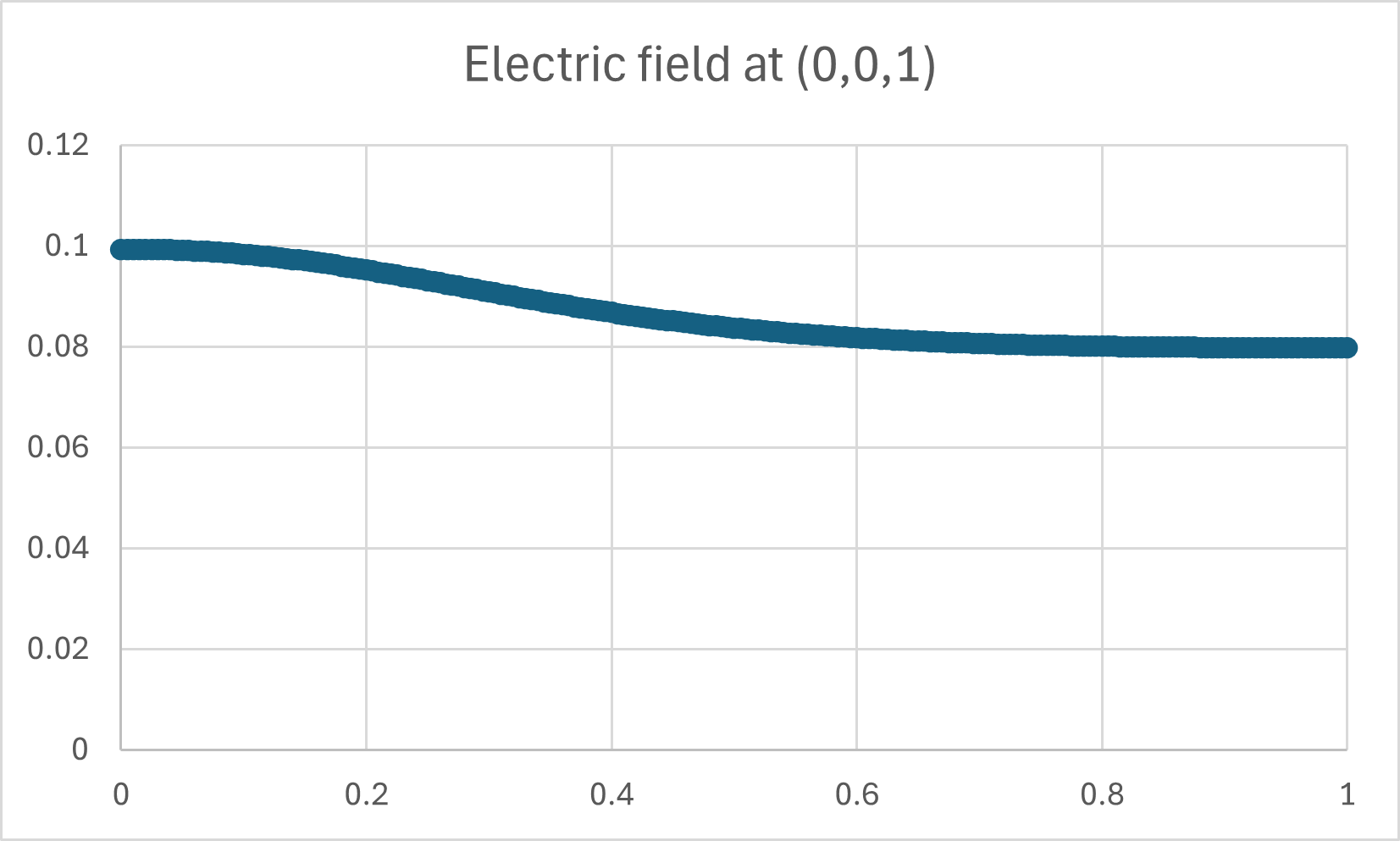}
    \caption{Electric field at $z$-coordinate $1$ for various $p$ (on the bottom)}
\end{figure}

Explicitly, this is the graph of
\[\frac{\int_{0}^{2\pi} \sqrt{\frac{q^{2} + p^{2}(2 + \cos u)^{2}}{\alpha^{2} + 2\alpha\sin(u) + 5 + 4\cos(u)}} \ \mathrm{d}u}{\int_{0}^{2\pi} \sqrt{q^{2} + p^{2}(2 + \cos u)^{2}} \ \mathrm{d}u}\] which is the electric potential divided by the length. 

\section{Analytic Calculations on the Length of the Torus Knot}

In this section, we show a demonstration of complex analysis methods by finding an explicit approximation for the length of the torus knot. 

This is a relevant calculation because we need it to study different knots that all have the same charge, since we must divide by the length of the knot to find the charge per unit length. Our methods using complex analysis provide a good closed-form approximation for the length of the torus knot, and we demonstrate that this is a viable method for calculating integrals related to the torus knot. Complex analysis methods have not previously been applied to this problem, and our results demonstrate this new approach.


\subsection{The Length Integral}

The length of the knot is equal to 
\[\int_{0}^{2\pi} |r'(t)| \ \mathrm{d}t = \int_{0}^{2\pi} \sqrt{q^{2} + p^{2}(2 + \cos(u))^{2}} \ \mathrm{d}u.\] Note that the integrand is periodic every $2\pi$, so this is equal to
\[p\int_{-\pi}^{\pi}\sqrt{\frac{q^{2}}{p^{2}} + (2 + \cos(u))^{2}} \ \mathrm{d}u.\] Now, let $k = \frac{q}{p}$. Setting $\omega = e^{iu}$ gives $\ \mathrm{d}\omega = i\omega \ \mathrm{d}u$ so our integral becomes
\[p\int_{|\omega| = 1}\frac{\sqrt{k^{2} + \left(2 + \frac{1}{2\omega} + \frac{\omega}{2}\right)^{2}}}{i\omega}\ \mathrm{d}\omega = \frac{p}{2}\int_{|\omega| = 1}\frac{\sqrt{4k^{2}\omega^{2} + \left(\omega^{2} + 4\omega + 1\right)^{2}}}{i\omega^{2}}\ \mathrm{d}\omega\] where the integral is over the unit circle, starting from $-1$ and going clockwise. 

\subsection{Finding Roots and Poles}

We notice that there is a pole at $\omega = 0$ of order $2$. This pole is not a branch point. There are four branch points at the roots of the numerator, which are at $\pm \sqrt{-k^{2} - 4ki + 3} \pm ik - 2$, where the two $\pm$ are independent. We note that two of these are in the unit circle and two are outside it. We also notice that this gives two sets of conjugates. 

We want to know where the poles are, so we may calculate them explicitly. The square root gives us 
\[\pm_{1} \left((k^{2} + 3)^{2} + (4k)^{2}\right)^{1/4}\exp\left(\frac{1}{2} i \left(\tan^{-1}\left(\frac{4k}{k^{2} - 3}\right) + K\right) \right)\pm_{2} ik - 2\] where $K = 0$ if $3 - k^{2}$ is positive (that is, $k < \sqrt 3$) and $K = \frac{\pi}{2}$ otherwise. We note that $\cos\left(\frac{\theta}{2}\right) = \sqrt{\frac{1 + \cos(\theta)}{2}}$ and $\sin\left(\frac{\theta}{2}\right) = \sqrt{\frac{1 - \cos(\theta)}{2}}$, and that $\cos^{2}(\theta) = \frac{1}{1 + \tan^{2}(\theta)}$. We can take the square root: the $K$ term ensures that $\cos\left(\frac{1}{2} i \left(\tan^{-1}\left(\frac{4k}{k^{2} - 3}\right) + K\right) \right)$ will be positive. 

By substituting this in, we then obtain the roots (inside the unit circle) at
\[\sqrt{\frac{\sqrt{(k^{2} + 1)(k^{2} + 9) + 3 - k^{2}}}{2}} - 2 \pm i\left(\sqrt{\frac{\sqrt{(k^{2} + 1)(k^{2} + 9) - 3 + k^{2}}}{2}} - k\right)\] with the real and imaginary parts swapped if $k > \sqrt3$. This shows that we can find the positions explicitly in terms of $k$. More importantly, we may bound the real part between $0$ and $-2 + \sqrt 3$ and the imaginary part between around $-0.15$ and $0.15$ The exact bounds are approximately on an ellipse, as shown in Figure 8 (the blue points plotted are the branch points for $k = 0,0.1,0.2,\ldots,1.2$)

\begin{figure}[h]
    \centering
    \includegraphics[scale=1]{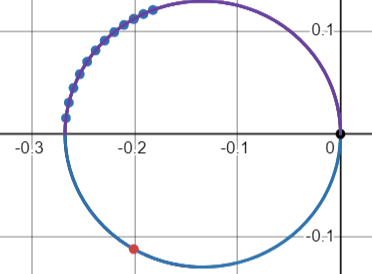}
    \caption{Locations of branch points for the length of the torus knot}
\end{figure}

Let the roots inside the unit circle be $m - ni$ and $m + ni$ where $n$ is positive. 

\subsection{Making Branch Cuts}

We make branch cuts vertically from each of these points, and both branch cuts go out from the negative real axis to connect to the other points. The two horizontal segments of the contour cancel since we have two branch cuts between them, and thus they are the same sign but in opposite directions. Thus, to determine the original integral, we need the residue at $0$ (so that we can use Cauchy's residue theorem) and the integral on the vertical segment. The resulting contour and branch cuts are shown in Figure 9.

\begin{figure}[h]
    \centering
    \includegraphics[scale=0.6]{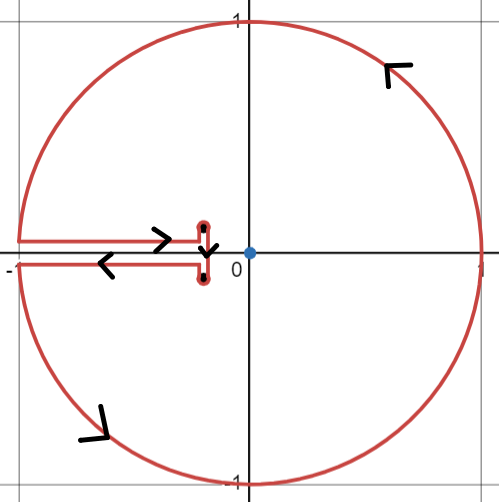}
    \caption{The contour for finding the length of a torus knot}
\end{figure}

From the formula for a residue, the residue is equal to the derivative of $\sqrt{4k^{2}\omega^{2} + (\omega^{2} + 4\omega + 1)^{2}}$ evaluated at $0$. The derivative is \[\frac{8k^{2}\omega + 2(\omega^{2} + 4\omega + 1)(2\omega + 4)}{2\sqrt{4k^{2}\omega^{2} + (\omega^{2} + 4\omega + 1)^{2}}}\] which is $4$ when $\omega = 0$. 

We can use the substitution $\omega = m + xi$ for the integral on the segment. Then $d\omega = idx$ so we simply get
\[\int_{|\omega| = 1}\frac{\sqrt{4k^{2}\omega^{2} + \left(\omega^{2} + 4\omega + 1\right)^{2}}}{i\omega^{2}}\ \mathrm{d}\omega = 8\pi + 2\int_{-n}^{n} \frac{\sqrt{q^{2} + p^{2}\left((m + xi)^{2} + 4(m + xi) + 1\right)^{2}}}{(m + xi)^{2}} \ \mathrm{d}x.\] The last term is multiplied by two because we traverse the segment between the two poles twice: once going up and another time going down, which do not cancel because of the branch cut. The $8\pi$ comes from $2\pi$ times $4$ by Cauchy's residue theorem.

\subsection{Approximating the Integral}

We have now reduced the problem to an integral on a segment. We may expand the square of the numerator as
\begin{align*}
    &\phantom{+}x^{4} \\
    &+ x^{3}(-4im - 8i) \\
    &+ x^{2}(-24m - 4k^{2} - 18) \\
    &+ x(4im^{3} + 24im^{2} + 8imk^{2} + 36im + 8i) \\
    &+ m^{4} + 8m^{3} + 4m^{2}k^{2} + 18m^{2} + 8m + 1.
\end{align*}
We have $-n \le x \le n$, and $n < 0.15$, and $-0.3 < m < 0$. The $x^{4}$ and $x^{3}$ terms are thus negligible (since the magnitude of the $x^{3}$ term is at least $-10$ and is not large enough to make it relevant compared to the $x^{2}$ term). We can also use the fact that $x = n$ is a root of this, so 
\[(4im^{3} + 24im^{2} + 8imk^{2} + 36im + 8i) = n^{2}(-4im - 8i).\]

Meanwhile, the constant term is equal to $-n^{4} - n^{2}(-24m - 4k^{2} - 18)$. Since $n^{4}$ is negligible, the constant term is around $n^{2}(-24m - 4k^{2} - 18) < -10n^{2}$. Thus the constant term is greater than $10n^{2}$ while the linear term is negative and greater than $-8n^{2}i$. Since $n$ is small, the linear term is thus negligible except near the roots (which we can ignore since the function is close to $0$ there). Thus, we look at just the constant and quadratic terms. This effectively sets the numerator equal to an ellipse. 

Thus, we can write the desired integral as
\[\int_{-n}^{n} \frac{\sqrt{b - ax^{2}}}{(m + ix)^{2}} \ \mathrm{d}x\] where $a,b,m,n$ are values that we can find explicitly in terms of $k$. This integral can be calculated analytically or approximated with a series. We do not show the details of the calculation here. However, one may find that this is equal to
\[\left.\frac{iam}{\sqrt{am^{2} + b}}\ln\left(\frac{m(\sqrt{b - ax^{2}} - \sqrt{b}) - x\sqrt{-am^{2} - b} - x\sqrt{b}i}{m(\sqrt{b - ax^{2}} - \sqrt{b}) + x\sqrt{-am^{2} - b} - x\sqrt{b}i}\right) + \sqrt{a}\arcsin\left(\frac{\sqrt a x}{\sqrt b}\right) + \frac{\sqrt{b - ax^{2}}}{x - im}\right|_{x = -n}^{n}\]

Thus, we used complex analysis to reduce our original integral to one which is much easier to calculate.

\section{Applying Complex Analysis to the Electric Potential}

Our complex analysis method is also applicable to the electric field and electric potential. In this section, we show that our method is applicable to the general problem. While will not perform the calculations here, we will demonstrate potential branch cuts and contours to use for electric field and potential calculations. 

\subsection{The Electric Potential Integral}

As we have seen before, the electric potential is
\[\int_{0}^{2\pi} \sqrt{\frac{q^{2} + p^{2}(2 + \cos u)^{2}}{\alpha^{2} + 2\alpha\sin(u) + 5 + 4\cos(u)}} \ \mathrm{d}u.\] If we substitute $\omega = e^{iu}$, we obtain $d\omega = i\omega du$ so the electric potential is equal to
\[\int_{|\omega| = 1} \sqrt{\frac{q^{2} + p^{2}(2 + \frac{\omega}{2} + \frac{1}{2\omega})^{2}}{i\omega \left(\alpha^{2} + 2\alpha\left(\frac{\omega}{2i} - \frac{1}{2i\omega}\right)+ 5 + 2\left(\frac{1}{\omega} + \omega\right)\right)}} \ \mathrm{d}u.\]
Where $|\omega| = 1$ refers to any loop around the unit circle - since the function of $u$ is periodic every $2\pi$, it does not matter where the loop starts. This is equal to
\[\int_{|\omega| = 1} \frac{\sqrt{4q^{2} + p^{2}\left(\omega^{2} + 4\omega + 1\right)^{2}}}{i\omega^{3/2}\left((\omega^{2}(2 - \alpha i) + \omega(5 + \alpha^{2}) + (2 + \alpha i)\right)^{1/2}} \ \mathrm{d}\omega.\]

As we saw in section 4.2, the denominator can be factored, giving us poles (and branch points) at $\omega = -2 - \alpha i$ and $\omega = -\frac{\alpha i + 2}{\alpha^{2} + 4}$.

The numerator gives the same branch points as we saw in the length of the torus knot. In addition, $0$ and $\infty$ are now branch points because the exponent of $\omega$ is $\frac{3}{2}$. One can see that the electric field has the same branch points.

Of these branch points, four are inside the unit circle. One of them is at the origin. The point $-\frac{\alpha i + 2}{\alpha^{2} + 4}$ lies on the circle $\left(\frac{1}{4} + x\right)^{2} + y^{2} = \frac{1}{16}$. The remaining two are the same as in the length function as we saw in section 5.1. Figure 10 shows all these points.

\begin{figure}[ht]
    \centering
    \includegraphics[scale=1.1]{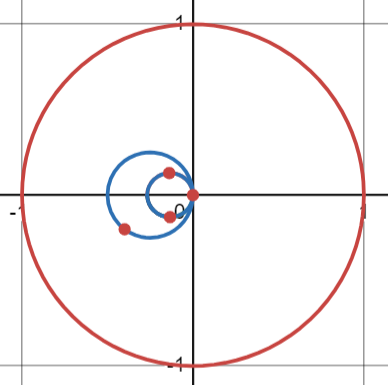}
    \caption{Branch points of the electric potential and curves they lie on}
\end{figure}

From connecting these points, we get a possible branch cut between these points. One of the promising branch cuts is shown in Figure 11.
\begin{figure}[ht]
    \centering
    \includegraphics[scale=0.8]{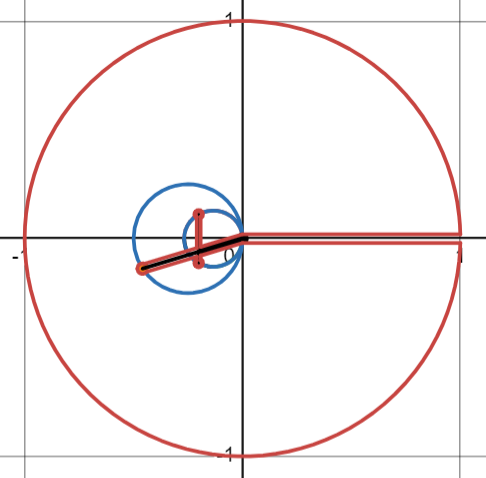} \hfill \includegraphics[scale=0.5]{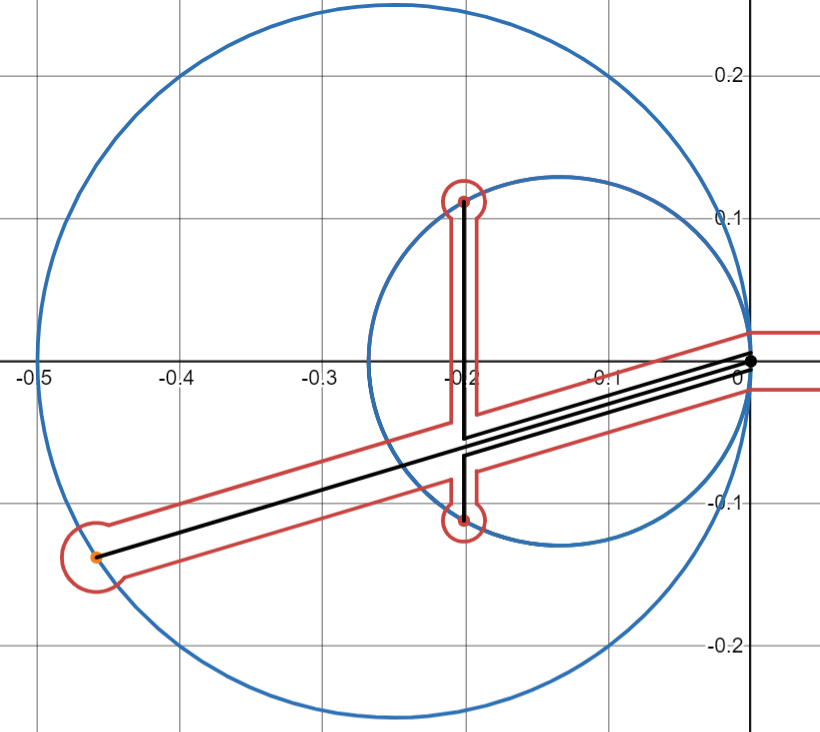}
    \caption{A possible branch cut for the electric potential}
\end{figure}

\section{Conclusion and Next Steps}

In our research, we have found trends in the properties of torus knots through numerical methods. We have discovered that the electric potential (per unit charge on the knot) and the location of the maximum electric field on the symmetric axis of the knot ($z$-axis) both increase as $p$ increases - that is, the knot winds more around the vertical circle (through the hole of the torus). We have also discovered and proven that the electric field along the $z$-axis is zero only at the origin.

In addition, we have introduced a new method for calculating these properties explicitly using contour integration and complex analysis. These methods simplify calculations in physical knot theory. Particularly, reduced the length of a torus knot to an integral along a segment, and found a good approximation using that integral. We also demonstrated how this method could be applied to the electric field and potential. 

One possible further direction for research is to consider integrals off the $z$-axis, so that all the zeroes can be located with numerical methods. Another direction is to consider other types of knots. Our complex analysis methods can simplify calculations given approximations for branch points, and are promising for future research.

\section*{Acknowledgements}

I would like to thank my mentor, Max Lipton, for his continued advice, knowledge, and guidance throughout this project. I would also like to thank the MIT PRIMES-USA program and organizers for giving me this great research opportunity.

\printbibliography

\end{document}